\documentclass[11pt,leqno]{article}
\usepackage{amsthm,amsmath, mathtools}
\usepackage[round]{natbib}
\usepackage{amsfonts}
\usepackage{ifthen}
\usepackage{enumitem}
\usepackage{amssymb,dsfont,url,color,booktabs}
 \usepackage{tikz}
 \usepackage{setspace}
%\doublespacing
\usepackage{epstopdf}
\usepackage{chngcntr}
\usepackage{bbm}
\usepackage{array}
\usepackage[export]{adjustbox}[2011/08/13] % for centering figures
\usepackage{geometry}
\usepackage{rotating}
\usepackage{todonotes}
\usepackage{xspace}
\usepackage[toc,page]{appendix}
\geometry{a4paper,left=40mm,right=30mm, top=3cm, bottom=4cm} 
 \usepackage[english]{babel}

\usepackage{caption}
\usepackage{subcaption}
\usepackage{multirow}
\usepackage{booktabs}
\usepackage[font={footnotesize}]{caption}

\usepackage{placeins} 		% for \FloatBarrier

\theoremstyle{plain}
\newtheorem{theorem}{Theorem}[section]
\newtheorem{lemma}[theorem]{Lemma}
\newtheorem{proposition}[theorem]{Proposition}

\newtheorem{example}[theorem]{Example}

\setlength{\arraycolsep}{2pt}  %damit der die L�cken um "=" bei eqnarray nicht zu weit sind

\newcommand{\notiz}[1]{\relax}

\newcommand{\zitep}[1]{\relax}

\newcommand{\1}{\mathds 1}            % Indikatoreins

\newcommand{\nn}{\mathds N}

\newcommand{\rr}{\mathds R}

\newcommand{\cc}{\mathds C}

\newcommand{\Price}[1][]{
		\ifthenelse{\equal{#1}{}}{\mathit{Price}}{\Price{}^{#1}}
	} 
	%$\Price[k,p],\Price,\Price^{k,p}$#

%\newcommand{\Price}{\mathit{Price}}
%{\Price(k,p)}

\newlength{\wordlength}

\newcommand{\olp}[1][]{\textcolor{white}{\underline{\textcolor{black}{\overline x}}_{\hspace{-.05em}\textcolor{black}{#1}}}}
%\olp[D] ergibt \overline p_D

\renewcommand{\cite}{\citet}

\numberwithin{equation}{section}
\numberwithin{figure}{section}
\numberwithin{table}{section}

\begin{document}
\title{\textbf{Improved error bound for multivariate Chebyshev polynomial interpolation}
}

\bigskip
%\\[1cm]
\author{\textbf{Kathrin Glau$\vphantom{l}^{1}$,} \textbf{Mirco Mahlstedt$\vphantom{l}^{1,}$\footnote{The authors thank the KPMG Center of Excellence in Risk Management for their support. We acknowledge fruitful discussions with and feedback from Maximilian Ga{\ss}, Daniel Kressner, Maximilian Mair and Christian P{\"o}tz.}
}
\\\\$\vphantom{l}^{\text{1}}$Technical University of Munich, Germany%, $\vphantom{l}^{\text{2}}$Unicredit 
}

\maketitle
\begin{abstract}
Chebyshev interpolation is a highly effective, intensively studied method and enjoys excellent numerical properties. The interpolation nodes are known beforehand, implementation is straightforward and the method is numerically stable. For efficiency, a sharp error bound is essential, in particular for high-dimensional applications. For tensorized Chebyshev interpolation, we present an error bound that  improves existing results significantly.
\end{abstract}

\textbf{Keywords}
(Tensorized) Chebyshev Polynomials, Polynomial Interpolation, Error Bounds
	
\noindent\textbf{2000 MSC}
%65D05,%Numerical analysis: 	Numerical approximation and computational geometry (primarily algorithms) {For theory, see 41-XX and 68Uxx}: Interpolation
41A10, %Approximations and expansions: Approximation by polynomials 
26C05% Real functions	Polynomials: 	Polynomials, rational functions: analytic properties   

%\doublespacing

\section{Introduction}
Tensorized Chebyshev interpolation underlies various algorithms for computational problems in high dimensions. The Chebyshev interpolation of function $f$ is the more beneficial, the higher the cost of evaluating $f$ itself is. The cost of evaluating $f$ directly scales with the computational cost for obtaining the coefficients of the Chebyshev interpolation. For computationally challenging high-dimensional problems, these costs become a bottleneck for the implementation of the interpolation. In these situations it is crucial to use the least number of nodal points possible to achieve a pre-specified accuracy.
 One valuable application is the quantification of parameter uncertainty for high-dimensional integrals that require Monte-Carlo simulations. Here, computationally expensive integrals have to be evaluated for a large set of different parameters. 
At this point interpolation in the parameter space promises to be highly beneficial as shown in \cite{GassGlauMahlstedtMair2016}.

%In \cite{GassGlauMahlstedtMair2016} the parametric dependency of option prices from payoff and model parameters is interpreted in such a way that the option price is a function of the parameters and thus, polynomial interpolation in the parameters is applied. 
%However, polynomial interpolation is an approximation technique and key quantities are the approximation error as well as the decay of this error. %Overall, there are several different polynomial interpolation methods, 
In this paper, we provide an improved error bound for the Chebyshev interpolation of analytic functions. 
%A detailed description of the Chebyshev polynomials in general is described in \cite{rivlin1990chebyshev}. The one-dimensional Chebyshev interpolation is given in \cite{Trefethen2013}, the multidimensional Chebyshev interpolation is described in e.g. \cite{SauterSchwab2004}. 
%By applying the Chebyshev interpolation in practice, not only the (sub)exponential error decay by itself is important, but due to computational costs also the amount of required interpolation points. Thus, the less interpolation points are required to guarantee a desired accuracy, the less computational time has to be invested.
\cite{SauterSchwab2004} derive an error bound for the tensorized Chebyshev interpolation. Their proof relies on a method for error estimation for analytic integrands from \cite{davis1975interpolation}.  In \cite{GassGlauMahlstedtMair2016} the result of \cite{SauterSchwab2004} has been slightly improved. The error bound is connected to the radius $\varrho$ of a Bernstein ellipse and in the one-dimensional case \cite{Trefethen2013} presents a different approach which goes back to \cite{Bernstein1912}. In \cite{borm2010efficient} error bounds are presented for the case when the derivatives of function $f$ are bounded. n this paper we assume $f$ to be analytic. We iteratively extend the one-dimensional result shown in \cite{Trefethen2013} to the multivariate by induction over the dimension.  The resulting nested structure of the proof reaches a certain complexity and therefore requires more space than the proof in \cite{SauterSchwab2004}.
Finally, we present the new error bound as a combination of this result with this result from \cite{SauterSchwab2004} and \cite{GassGlauMahlstedtMair2016}. We furthermore discuss examples that show a significant improvement of the new error bound.

In Section \ref{sec-main_result}, we present the main mathematical result and discuss this result. Section \ref{sec-Proofs} provides the proof and finally, Section \ref{sec-conclusion} concludes.

%\section{Chebyshev Polynomial Interpolation}\label{sec-Cheby}
%In this section, we introduce the notation for Chebyshev interpolation. Following \cite{Trefethen2013}, the one-dimensional version is shown. Then, we present the multivariate extension. Consider a function with a single varying variable
%\begin{align}\label{Price_p}
%f^x, \qquad x\in [-1,1].
%\end{align}
%%$p\in [-1,1]$ and option prices
%%\ni p\mapsto \Price^p$. %$[p_\ast,p^\ast]\ni p\mapsto \Price^p$ is smooth.
%An interpolation of $f^x$ with Chebyshev polynomials of degree $N$ is of the form
% \begin{align}\label{eq-ChebInter1dim}
%I_N(f^{(\cdot)})(x)&:= \sum_{j=0}^N c_j T_j(x), \\
%\intertext{with coefficients}
%c_j&:=\frac{2^{\1_{0<j<N}}}{N}\sum_{k=0}^{N}{}^{''} f^{x_k}\cos\Big(j\pi\frac{k}{N}\Big),\quad j\le N,
% \end{align}
%and basis functions
%  \begin{align}\label{eq-Tjp}
%  T_j(x) := \cos\big(j \arccos(p)\big)\quad\text{for $x\in [-1,1]$ and $j\le N$}
%   \end{align}
%where $\sum{}^{''}$ indicates that the first and last summands are halved. 
%%%%%%%%%%%%%%%%%%%%%%%%%%%%%%%%%%%%%%%%%%%%%%%%%%%%%%%%%%%%%%%%%%%%%%%%%%%%%%% 
% \paragraph{Multivariate Chebyshev Interpolation}\label{sec-multivariateChebyshev}
% \label{sec:Multi_Cheby}
%In case, the parameter space is a $D$-dimensional hyperrectangle, 
%The Chebyshev polynomial interpolation \eqref{eq-ChebInter1dim}--\eqref{eq-Tjp} has a

\section{Main result}\label{sec-main_result}
In this section, we provide our main result, the improved error bound for the multivariate Chebyshev interpolation. The main result in Theorem \ref{Asymptotic_error_decay_multidim_combined} is a combination of two error bounds. On the one hand, we use an extension of the result of \cite{SauterSchwab2004} as shown in \cite{GassGlauMahlstedtMair2016}. On the other hand, we extend the one-dimensional result presented in \cite{Trefethen2013} iteratively to the multivariate case.
%We consider functions of form
%\begin{align}
%f^x \quad\text{for }x\in\mathcal{X}
%\end{align}
% with $\mathcal{X}\subset\rr^D$ of hyperrectangular structure, i.e.\ $\mathcal{X}=[\underline{x}_1,\olp[1]]\times\ldots\times[\underline{x}_D,\olp[D]]$ with real $\underline{x}_i\le\olp[i]$ for all $i=1,\ldots,D$.

We consider the tensor based extension of Chebyshev polynomial interpolation of functions $f:\mathcal{X}\rightarrow\mathbb{R}$, $\mathcal{X}=[\underline{x}_{1},\overline{x}_1]\times\ldots \times[\underline{x}_D,\overline{x}_D]\subset\mathbb{R}^D$,
as in e.g. \cite{SauterSchwab2004}.
For notational ease we introduce the polynomials for $\mathcal{X}=[-1,1]^D$ with the obvious extension to general hyperrectangle by the appropriate linear transforms. 
%In order to obtain a nice notation, consider the interpolation of functions %instead % hypercube the 
%% $Let %$f$ be the function that shall be approximated,
%	\begin{equation}
%		f^x,\qquad x\in [-1,1]^D.% \rightarrow \rr.
%	\end{equation}
	%For a more general hyperrectangular variable space $\mathcal{X}=[\underline{x}_{1},\overline{x}_1]\times\ldots \times[\underline{x}_D,\overline{x}_D]$, the appropriate linear transformations need to be performed. 
	Let $\overline{N}:=(N_1,\ldots,N_D)$ with $N_i \in\nn_0$ for $i=1,\ldots,D$. The interpolation with $\prod_{i=1}^D (N_{i}+1)$ summands is given by
%We approximate $f$ by Chebyshev polynomials using again a weighted sum of basis functions,
	\begin{equation}
%	\label{eq:ChebySumd}
	I_{\overline{N}}(f)(x) := \sum_{j\in J} c_jT_j(x), %:= \sum_{j_1=0}^{N_1}\ldots \sum_{j_D=0}^{N_D} c_{(j_1,\ldots,j_D)} T_{(j_1,\ldots,j_D)}(x)y,
	\end{equation}
where the function variable $x=(x_1,\dots, x_d)'\in [-1,1]^d$ and 
the summation index $j$ is a multiindex ranging over $J:=\{(j_1,\dots, j_D)\in\nn_0^D: j_i\le N_i\,\text{for }i=1,\ldots,D\}$.
%\begin{equation}
%	\label{eq:ChebySumd}
%	I_{\overline{N}}(f^{(\cdot)})(x) = \sum_{j_1=0}^{N_1}\ldots \sum_{j_D=0}^{N_D} c_{(j_1,\ldots,j_D)} T_{(j_1,\ldots,j_D)}(x).
	%\end{equation}
For $j=(j_1,\dots, j_D)\in J$, the basis functions are defined as $T_{j}(x_1,\dots,x_D) = \prod_{i=1}^D T_{j_i}(x_i)$
%		T_{(j_1,\dots,j_d)}(x_1,\dots,x_d) $
%$T_j$ f are defined by
%	\begin{equation}
%	\label{def:ChebyTjd}
%		T_{j}(x_1,\dots,x_D) = \prod_{i=1}^D T_{j_i}(x_i).
%%		T_{(j_1,\dots,j_d)}(x_1,\dots,x_d) 
	%\end{equation}	
and the coefficients %$c_j$ for $j=(j_1,\dots, j_D)\in J$ 
are given by
	\begin{equation}
	\label{def:Chebycj}
		c_j = \Big( \prod_{i=1}^D \frac{2^{\1_{\{0<j_i<N_i\}}}}{N_i}\Big)\sum_{k_1=0}^{N_1}{}^{''}\ldots\sum_{k_D=0}^{N_D}{}^{''} f(x^{(k_1,\dots,k_D)})\prod_{i=1}^D \cos\left(j_i\pi\frac{k_i}{N_i}\right),
	\end{equation}
where $\sum{}^{''}$ indicates that the first and last summand are halved and the Chebyshev nodes $x^k$ for multiindex $k=(k_1,\dots,k_D)\in J$ are given by $x^k = (x_{k_1},\dots,x_{k_D})$% \in [-1,1]^D$
%	\begin{equation}
%	\label{eq:Chebynodesd}	
%		x^k = (x_{k_1},\dots,x_{k_D})% \in [-1,1]^D,
%	\end{equation}
	with the univariate Chebyshev nodes 
	$x_{k_i}=\cos\left(\pi\frac{k_i}{N_i}\right)$ for $k_i=0,\ldots,N_i$ and $i=1,\ldots,D$.
%\clearpage

For hyperrectangle $\mathcal{X}\subset\mathbb{R}^D$ and parameter vector $\varrho\in(1,\infty)^D$, we define the textit{generalized Bernstein ellipse} by
\begin{align}\label{eq-genB}
B(\mathcal{X},\varrho):=B([\underline{x}_1,\olp[1]],\varrho_1)\times\ldots\times B([\underline{x}_D,\olp[D]],\varrho_D ),
\end{align}
where $B([\underline{x},\olp],\varrho):=\tau_{[\underline{x},\olp]}\circ B([-1,1],\varrho)$, with the transform $\tau_{[\underline{x},\olp]}\big(\Re(x)\big):=\overline{x} + \frac{\underline{x}-\olp}{2}\big(1-\Re(x)\big)$ and $\tau_{[\underline{x},\olp]}\big(\Im(x)\big):= \frac{\olp-\underline{x}}{2}\Im(x)$ for all $x\in\cc$ and Bernstein ellipses $B([-1,1],\varrho_i)$ for $i=1,\ldots,D$.

\begin{theorem}%[Asymptotic error decay tensorized Chebyshev interpolation]
 \label{Asymptotic_error_decay_multidim_combined}
  Let $f:\mathcal{X}\rightarrow\mathbb{R}$ have an analytic extension to some generalized Bernstein ellipse $B(\mathcal{X},\varrho)$ for some parameter vector $\varrho\in (1,\infty)^D$ with $\max_{x\in B(\mathcal{X},\varrho)}|f(x)|\le V<\infty$. %Let $\tilde{\varrho}_i:=\frac{\varrho_i^{-N_{i}}}{\varrho_{i}-1}$ for $i=1,\ldots,D+1$ 
  Then
  \begin{align*}
   \max_{x\in\mathcal{X}}\big|f(x)& - I_{\overline{N}}(f)(x)\big|\le\min\{a(\varrho,N,D),b(\varrho,N,D)\},
  \end{align*}
  where, denoting by $S_D$ the symmetric group on $D$ elements,
  \begin{align*}
  a(\varrho,N,D)&=\min_{\sigma\in S_D}\sum_{i=1}^D 4V\frac{\varrho_{\sigma(i)}^{-N_i}}{\varrho_i-1} + \sum_{k=2}^D 4V\frac{\varrho_{\sigma(k)}^{-N_k}}{\varrho_{\sigma(k)}-1}\cdot 2^{k-1}  \frac{(k-1) + 2^{k-1}-1}{\prod_{j=1}^{k-1}(1-\frac{1}{\varrho_{\sigma(j)}})},\\
  b(\varrho,N,D)&=2^{\frac{D}{2}+1}\cdot V \cdot\left(\sum_{i=1}^D\varrho_i^{-2N_i}\prod_{j=1}^D\frac{1}{1-\varrho_j^{-2}}\right)^{\frac{1}{2}}.
  \end{align*}

  \end{theorem}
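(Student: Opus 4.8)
The plan is to establish the two bounds separately and then combine them by the minimum. The bound $b(\varrho,N,D)$ follows from the tensorized Chebyshev-series estimate of \cite{SauterSchwab2004} in the sharpened form of \cite{GassGlauMahlstedtMair2016}: the factor $2^{\frac{D}{2}+1}$ and the square-root structure are precisely what one obtains from an $\ell^2$-type (Frobenius) bound on the array of multivariate Chebyshev coefficients, each of which decays like $\prod_i\varrho_i^{-n_i}$ by analyticity on $B(\mathcal{X},\varrho)$. Since I may invoke that result directly, the substance of the argument lies in the new bound $a(\varrho,N,D)$, which I prove by induction over the dimension $D$, extending the univariate estimate of \cite{Trefethen2013}.

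For the base case $D=1$ the bound reduces to Trefethen's univariate estimate $\max_{x\in[-1,1]}|f(x)-I_{N}(f)(x)|\le 4V\varrho^{-N}/(\varrho-1)$, valid for $f$ analytic and bounded by $V$ on the Bernstein ellipse of radius $\varrho$. For the inductive step I fix an ordering $\sigma\in S_D$ of the coordinate directions and write the tensorized operator as a composition $I_{\overline N}=I^{(\sigma(1))}\circ\cdots\circ I^{(\sigma(D))}$ of univariate interpolation operators, each acting in a single variable. Writing $P_0=\mathrm{Id}$ and $P_k=I^{(\sigma(1))}\circ\cdots\circ I^{(\sigma(k))}$, telescoping yields
\begin{align*}
f-I_{\overline N}f=\sum_{k=1}^D P_{k-1}\big(f-I^{(\sigma(k))}f\big),
\end{align*}
so that it suffices to bound each summand and then minimize over $\sigma$. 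The inner factor $f-I^{(\sigma(k))}f$ is a univariate interpolation error in the direction $\sigma(k)$; holding the remaining variables fixed and using analyticity in that single direction, the base case bounds its supremum over $\mathcal{X}$ by $4V\varrho_{\sigma(k)}^{-N_k}/(\varrho_{\sigma(k)}-1)$, uniformly in the other coordinates. Collecting the $k=1$ term and the ``unamplified'' part of each remaining term produces the first sum in $a$ over all $i=1,\dots,D$.

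The heart of the argument, and the step I expect to be the main obstacle, is controlling the amplification caused by the preceding operators $P_{k-1}=I^{(\sigma(1))}\circ\cdots\circ I^{(\sigma(k-1))}$. A crude estimate by Lebesgue constants would introduce logarithmic-in-$N$ factors and would not reproduce the geometric factors in $a$. Instead I would pass to the univariate Chebyshev expansion in each already-interpolated direction: if a function is analytic and bounded by $M$ on the Bernstein ellipse of radius $\varrho_{\sigma(j)}$ in the $\sigma(j)$-th variable, its Chebyshev coefficients in that variable decay like $2M\varrho_{\sigma(j)}^{-n}$, and the interpolant's coefficients, obtained by aliasing, are controlled by the corresponding geometric sums. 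This yields a per-step amplification of order $2/(1-\varrho_{\sigma(j)}^{-1})$ while preserving analyticity and boundedness in all not-yet-interpolated directions, so the estimate can be propagated through the composition and the induction hypothesis applies to the reduced-dimension problem.

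Accumulating these per-step factors over the $k-1$ preceding directions produces the product $\prod_{j=1}^{k-1}(1-\varrho_{\sigma(j)}^{-1})^{-1}$ together with the power $2^{k-1}$, and a sharp count of the aliased cross-terms is what must account for the combinatorial factor $(k-1)+2^{k-1}-1$ in the second sum of $a$. The delicate points are therefore twofold: first, verifying that analyticity and the relevant sup-bounds genuinely survive each interpolation step, so that the propagation is legitimate; and second, making the bookkeeping of the aliasing contributions tight enough to yield the stated constants rather than merely an exponential-in-$D$ estimate. Taking the minimum over $\sigma\in S_D$ of the resulting sum gives $a(\varrho,N,D)$, and combining it with $b(\varrho,N,D)$ via the minimum completes the proof.
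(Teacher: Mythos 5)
Your proposal is correct in substance, and at the decisive technical step it takes a genuinely different route from the paper. The skeleton is in fact the same: the paper proves the $a$-bound (its Proposition \ref{Asymptotic_error_decay_multidim_new_permu}) by induction over the dimension, and that induction, once unrolled, is exactly your telescoping identity with the roles of the directions mirrored (the paper amplifies the $k$-th univariate error by the interpolation operators of the \emph{later} directions, you by the \emph{earlier} ones; both minimize over $\sigma\in S_D$). The difference is how the amplified term is controlled. The paper never estimates $P_{k-1}$ as an operator; instead, writing $g=f-I^{1,\ldots,D}_{N_1,\ldots,N_D}(f)$, it splits $|I^{D+1}_{N_{D+1}}(g)|\le|I^{D+1}_{N_{D+1}}(g)-g|+|g|$, handles $|g|$ by the induction hypothesis, and bounds the first term by the univariate theorem applied to $g$ in the new variable. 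That requires $\sup_{x_{D+1}\in B([-1,1],\varrho_{D+1})}|g|$, i.e.\ a bound on the $D$-dimensional interpolation \emph{error} continued analytically in the not-yet-interpolated direction; this is the paper's Lemma \ref{Lemma_Upper_Bound}, whose proof rests on an explicit multivariate aliasing representation of the tensor interpolant (Lemma \ref{Lemma_Interpolation_D_Components}), and it is precisely that representation which produces the combinatorial factor $(k-1)+2^{k-1}-1$. Your per-step bound --- a function analytic and bounded by $M$ on the $\varrho$-ellipse in one variable has interpolant bounded by $2M/(1-\varrho^{-1})$ on $[-1,1]$, because each discarded Chebyshev coefficient aliases to exactly one retained index and the coefficients decay like $2M\varrho^{-n}$ --- is sound and replaces both lemmas. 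What it buys is modularity and, in fact, a sharper constant: your bookkeeping yields
\begin{align*}
\max_{x\in\mathcal{X}}\big|f(x)-I_{\overline N}(f)(x)\big|
\le\min_{\sigma\in S_D}\sum_{k=1}^D 4V\,\frac{\varrho_{\sigma(k)}^{-N_k}}{\varrho_{\sigma(k)}-1}\cdot\frac{2^{k-1}}{\prod_{j=1}^{k-1}\bigl(1-\varrho_{\sigma(j)}^{-1}\bigr)},
\end{align*}
and no count of aliased cross-terms will bring back $(k-1)+2^{k-1}-1$; that factor is an artifact of the paper's cruder estimate, not of your decomposition.

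Two points must be made explicit to close the argument. First, for the propagation to start, the univariate error bound on $f-I^{(\sigma(k))}f$ must hold uniformly for the \emph{already-interpolated} variables $x_{\sigma(1)},\ldots,x_{\sigma(k-1)}$ ranging over their complex Bernstein ellipses, not merely over $\mathcal{X}$ as you wrote; this is automatic, since $|f|\le V$ on the full generalized Bernstein ellipse, so Trefethen's argument applies with the other coordinates held fixed at complex values, but it is exactly the hypothesis your amplification steps consume. Second, since your bound differs from $a(\varrho,N,D)$, you must add the observation that it is termwise dominated by it: for $k=1$ the terms coincide, and for $k\ge2$ your $k$-th term is at most the $k$-th term of the paper's second sum because $(k-1)+2^{k-1}-1\ge1$. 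With these two remarks, your estimate implies the bound by $a(\varrho,N,D)$, and combining with the cited bound $b(\varrho,N,D)$ from \cite{GassGlauMahlstedtMair2016} gives the stated minimum.
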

  \begin{proof}
  The bound $\max_{x\in\mathcal{X}}\big|f(x) - I_{\overline{N}}(f)(x)\big|\le b(\varrho,N,D)$ follows from \cite[Theorem 2]{GassGlauMahlstedtMair2016} as extension of \cite{SauterSchwab2004}. We show $\max_{x\in\mathcal{X}}\big|f(x) - I_{\overline{N}}(f)(x)\big|\le a(\varrho,N,D)$ in Section \ref{sec-Proofs} in Proposition  \ref{Asymptotic_error_decay_multidim_new_permu}. Combining both results obviously yields the assertion of the theorem.
 % The error bound $a(\varrho,N,D)$ is a new result, which we will proof in Section \ref{sec-Proofs} in Theorem \ref{Asymptotic_error_decay_multidim_new_permu}. Obviously, when both error bounds hold for all $\varrho$, $N$ and $D$,  the assertion of the theorem holds as well.
  \end{proof}
%\paragraph{Discussion of main result}
The examples below show that $\min\{a(\varrho,N,D),b(\varrho,N,D)\}$ improves both error bounds $a(\varrho,N,D)$ and $b(\varrho,N,D)$.
%To this aim we compare the two error bounds in Theorem \ref{Asymptotic_error_decay_multidim_combined} and illustrate that one error bound does not dominate the other error bound and therefore, the combination of both error bounds in Theorem \ref{Asymptotic_error_decay_multidim_combined} improves the error bound.
 %In Theorem \ref{Asymptotic_error_decay_multidim_new_permu} the sequence of the different $\varrho_i$ is considered, whereas in Theorem \ref{Asymptotic_error_decay_multidim} each $\varrho_i$ is weighted equally in the error bound. For ilustration purposes we will focus on two-dimensional examples in the following and present some examples. 
Noticing that both bounds are scaled with the factor $V$, we set $V=1$, moreover, we choose $D=2$. 
\begin{example}\label{Example_1}
For $\varrho_1=2.3$ and $\varrho_2=1.8$, and $N_1=N_2=10$, we have $b(\varrho,N,D)=0.0018$ and $a(\varrho,N,D)=0.0066$. Therefore, in this example the error bound $b(\varrho,N,D)$ is sharper.
\end{example} 
\begin{example}\label{Example_2}
If we change slightly the setting from Example \ref{Example_1} to $\varrho_1=2.3$ and $\varrho_2=2.5$, and $N_1=N_2=10$, then the resulting error bounds are $b(\varrho,N,D)=0.0017$ and $a(\varrho,N,D)=0.0011$ and thus, the later is the sharper error bound.
\end{example} 
As shown in Examples \ref{Example_1} and \ref{Example_2}, slight changes in the domain of analyticity and, thus, the radii of the Bernstein ellipses, may reverse the order of $a(\varrho,N,D)$ and $b(\varrho,N,D)$. 
Figure \ref{fig:Same_Rho} displays both error bounds $a(\varrho,N,D)$ and $b(\varrho,N,D)$ for varying $\varrho$ with $\varrho_1=\varrho_2$, $N_1=N_2=10$. We observe that both error bounds intersect at $\varrho_1=\varrho_2\approx2.800882$. For smaller values of $\varrho$, the sharper error bound is $b(\varrho,N,D)$, whereas for higher values $a(\varrho,N,D)$ is sharper.
\begin{figure}[htb!]
\includegraphics[width=0.8\textwidth, center]{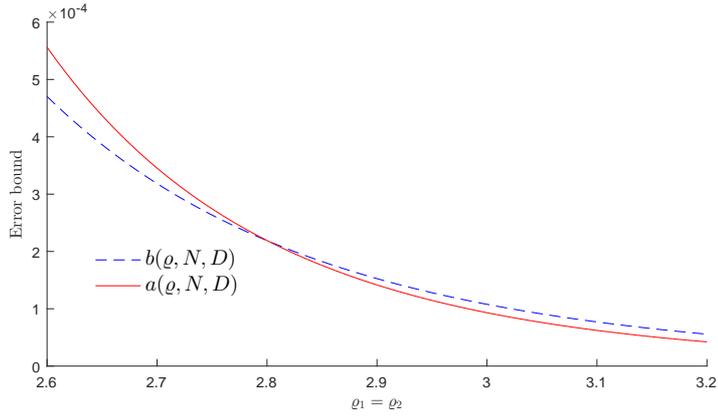}
\caption{Comparison of the error bounds $b(\varrho,N,D)$ (blue, dashed) and $a(\varrho,N,D)$ (red) by setting $\varrho_1=\varrho_2$ and $N_1=N_2=10$. At $\varrho_1=\varrho_2\approx2.800882$ both error bounds intersect.}
\label{fig:Same_Rho}
\end{figure}
So far, the examples indicate that for a smaller radius of the Bernstein ellipse, $b(\varrho,N,D)$ tends to be the better error bound and that for higher radii of the Bernstein ellipses or for strongly differing radii, $a(\varrho,N,D)$ tends to be the sharper error bound.
Our last example shows the situation where thanks to Theorem \ref{Asymptotic_error_decay_multidim_combined} less nodes are required to guarantee a pre-specified accuracy.

\begin{example}\label{Example_3}
Let the radii of the Bernstein ellipse be $\varrho_1=2.95$ and $\varrho_2=9.8$. Assuming $V=1$, we are interested in achieving an accuracy of $\varepsilon\le 2\cdot 10^{-4}$. To achieve $b(\varrho,N,D)=\le\varepsilon$, we have to set $N_1=11$ and $N_2=5$. For achieving $a(\varrho,N,D)\le\varepsilon$, we have to set $N_1=8$ and $N_2=4$. Instead of $72=(11+1)\cdot(5+1)$ nodal points applying error bound $b(\varrho,N,D)$, we only need to use $45=(8+1)\cdot(4+1)$ nodal points applying the error bound $a(\varrho,N,D)$. 
\end{example}
Example \ref{Example_3} highlights the potential of using fewer nodal points to achieve a desired accuracy by comparing both error bounds. Especially when the evaluation of the interpolated function at the nodal points is challenging, this reduces the computational costs noticeably. This particularly arises for Chebyshev interpolation combined with Monte-Carlo simulation for high-dimensional parametric integration as shown in \cite{GassGlauMahlstedtMair2016}.

Summarizing, Theorem \ref{Asymptotic_error_decay_multidim_combined} improves the error bounds $a(\varrho,N,D)$ and $b(\varrho,N,D)$ significantly.

\section{Proofs}\label{sec-Proofs}

In the following, we will present our approach to derive the error bound $a(\varrho,N,D)$ in Theorem \ref{Asymptotic_error_decay_multidim_combined}. Whereas in proof of \cite[Lemma 7.3.3]{SauterSchwab2004} an orthonormal system of appropriately scaled Chebyshev polynomials has been used and each $\varrho_i$ is weighted equally, we will now extend the one-dimensional result in \cite[Theorem 8.2]{Trefethen2013} by induction over the dimension $D$. In each iteration step the interpolation in one additional variable is added consecutively.  

\begin{proposition}%[Asymptotic error decay tensorized Chebyshev interpolation]
 \label{Asymptotic_error_decay_multidim_new_permu}
Let $f:\mathcal{X}\rightarrow\mathbb{R}$ have an analytic extension to some generalized Bernstein ellipse $B(\mathcal{X},\varrho)$ for some parameter vector $\varrho\in (1,\infty)^D$ with\\
$\max_{x\in B(\mathcal{X},\varrho)}|f(x)|\le V<\infty$.
  Then
  \begin{align*}%\label{absch-TCana}
 \max_{x\in\mathcal{X}}\big|f(x) - &I_{\overline{N}}(f)(x)\big|
  \\
 &\le\min_{\sigma\in S_D}\sum_{i=1}^D 4V\frac{\varrho_{\sigma(i)}^{-N_i}}{\varrho_i-1} + \sum_{k=2}^D 4V\frac{\varrho_{\sigma(k)}^{-N_k}}{\varrho_{\sigma(k)}-1}\cdot 2^{k-1}  \frac{(k-1) + 2^{k-1}-1}{\prod_{j=1}^{k-1}(1-\frac{1}{\varrho_{\sigma(j)}})},
  \end{align*}
  where $S_D$ denotes the symmetric group on $D$ elements.
  \end{proposition}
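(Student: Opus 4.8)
The plan is to factor the tensorized operator as a composition $I_{\overline N}=I_{\sigma(1)}\circ\cdots\circ I_{\sigma(D)}$ of the $D$ commuting one-dimensional Chebyshev interpolation operators, where $I_{\sigma(k)}$ denotes interpolation of degree $N_{\sigma(k)}$ in the single variable $x_{\sigma(k)}$, and then to telescope the global error. Writing $P_j:=I_{\sigma(1)}\cdots I_{\sigma(j)}$ with $P_0:=\mathrm{id}$, one has
\[
f-I_{\overline N}f=\sum_{k=1}^D P_{k-1}\bigl(f-I_{\sigma(k)}f\bigr),
\]
so it suffices to bound each summand on $[-1,1]^D$. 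I would further split the $k$-th summand as $P_{k-1}h_k=h_k+(P_{k-1}-\mathrm{id})h_k$, where $h_k:=f-I_{\sigma(k)}f$ is the pure one-dimensional interpolation error in direction $\sigma(k)$. The first piece $h_k$, summed over $k$, will produce the ``diagonal'' sum $\sum_i 4V\varrho_{\sigma(i)}^{-N_i}/(\varrho_i-1)$, while the correction $(P_{k-1}-\mathrm{id})h_k$ (which vanishes for $k=1$) will produce the second sum. Since the permutation $\sigma$ merely fixes the order in which directions are peeled off, taking $\min_{\sigma\in S_D}$ at the end yields the stated bound; the whole estimate is organized as an induction over $D$, equivalently over the number $k-1$ of already-applied operators, whose base case $k=1$ is exactly \cite[Theorem 8.2]{Trefethen2013}.

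Two one-dimensional facts drive the argument. First, for $g$ analytic on a Bernstein ellipse of radius $\varrho$ with maximum modulus $M$ there, \cite[Theorem 8.2]{Trefethen2013} gives $\max_{[-1,1]}|g-I_Ng|\le 4M\varrho^{-N}/(\varrho-1)$; applied in direction $\sigma(k)$ with the other coordinates frozen and $\max_{B(\mathcal X,\varrho)}|f|\le V$, this bounds the core factor by $\|h_k\|\le 4V\varrho_{\sigma(k)}^{-N_{\sigma(k)}}/(\varrho_{\sigma(k)}-1)$. Second, and this is what replaces the logarithmically growing Lebesgue constant, I would use that the \emph{interpolant itself} is controlled on $[-1,1]$ by the size of $g$ on the ellipse: from the coefficient decay $|a_m|\le 2M\varrho^{-m}$, the aliasing formula for the interpolation coefficients, and $|T_m|\le 1$ on $[-1,1]$, one obtains a bound of the form $\max_{[-1,1]}|I_Ng|\le \tfrac{2}{1-1/\varrho}\,M$ (up to bookkeeping of the halved end coefficients and the aliased tails). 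Crucially, interpolating in one variable is a finite linear combination of values at real nodes and therefore preserves analyticity in all remaining variables, so these two estimates can be chained from one coordinate direction to the next.

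To bound the correction $(P_{k-1}-\mathrm{id})h_k$ I would expand $P_{k-1}-\mathrm{id}=\prod_{j=1}^{k-1}\bigl(\mathrm{id}-E_{\sigma(j)}\bigr)-\mathrm{id}$ in terms of the error operators $E_{\sigma(j)}:=\mathrm{id}-I_{\sigma(j)}$, giving a signed sum of $2^{k-1}-1$ nonidentity products. Each operator factor acting in direction $\sigma(j)$ is bounded, via the coefficient-decay interpolant estimate above, by the amplification factor $\tfrac{2}{1-1/\varrho_{\sigma(j)}}$, so every product is bounded by $\tfrac{2^{k-1}}{\prod_{j=1}^{k-1}(1-1/\varrho_{\sigma(j)})}$ applied to $\|h_k\|$. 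Counting the first-order error terms ($k-1$ of them) together with the full collection of nonidentity products ($2^{k-1}-1$ of them) is what generates the combinatorial multiplier $(k-1)+2^{k-1}-1$, and the $\varrho_{\sigma(k)}^{-N_{\sigma(k)}}$ factor is inherited from $h_k$. Summing the resulting estimate over $k$ and minimizing over $\sigma$ then assembles both sums in the statement.

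The main obstacle is the precise constant in the third step: keeping careful track, through each of the $k-1$ intermediate interpolations, of which coordinates are real and which are complexified into their Bernstein ellipses, so that both analyticity and the uniform bound $V$ transfer correctly at every stage, and then performing the aliasing/coefficient bookkeeping accurately enough to obtain exactly $2^{k-1}\,\tfrac{(k-1)+2^{k-1}-1}{\prod_{j=1}^{k-1}(1-1/\varrho_{\sigma(j)})}$ rather than a cruder overcount. The telescoping, the reduction to the one-dimensional Trefethen bound, and the analyticity-preservation under interpolation are routine by comparison; the nested accounting of the amplified error products is where essentially all the work lies.
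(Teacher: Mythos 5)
Your proposal is correct in substance and it takes a genuinely different route through the key technical step. The skeleton is in fact shared: the paper's induction, with its recursion $f-I_{N_1,\ldots,N_{D+1}}(f)=\big(f-I^{D+1}_{N_{D+1}}(f)\big)+I^{D+1}_{N_{D+1}}\big(f-I^{1,\ldots,D}_{N_1,\ldots,N_D}(f)\big)$, unrolls to exactly your telescoping $f-I_{\overline N}f=\sum_k P_{k-1}\big(f-I_{\sigma(k)}f\big)$. Where you genuinely diverge is in how the correction term is estimated. The paper applies the \emph{univariate} error operator of the new direction to the \emph{accumulated multivariate error} $f-I^{1,\ldots,D}(f)$, and therefore must bound that multivariate error on the complexified $(D+1)$-st Bernstein ellipse; this is Lemma \ref{Lemma_Upper_Bound} (the quantity $\mathcal{M}(D)$), whose proof requires an explicit aliasing-series representation of the tensorized interpolant (Lemma \ref{Lemma_Interpolation_D_Components}) and an auxiliary limit $s_j=1+\epsilon\to 1$. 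You do the dual: you apply the accumulated operators $P_{k-1}-\mathrm{id}=\prod_{j<k}(\mathrm{id}-E_{\sigma(j)})-\mathrm{id}$ to the \emph{univariate} error $h_k$, so you only ever need (i) the one-dimensional bound of \cite[Theorem 8.2]{Trefethen2013} applied slice-wise with the frozen coordinates complexified (legitimate, since $f$ is bounded by $V$ on the full generalized ellipse and univariate interpolation preserves analyticity in the remaining variables), and (ii) an operator-norm bound for univariate interpolation of a function analytic on its ellipse, obtained from coefficient decay plus aliasing. This avoids the multivariate aliasing machinery of Lemmas \ref{Lemma_Upper_Bound}--\ref{Lemma_Interpolation_D_Components} entirely, and your natural count of $2^{k-1}-1$ nonidentity products is even slightly sharper than the paper's factor $(k-1)+2^{k-1}-1$, which double-counts the series tails and the aliased blocks.

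One piece of bookkeeping must be done as you anticipate, because it is not optional: with the interpolant bound as you state it, $\max_{[-1,1]}|I_Ng|\le\frac{2}{1-1/\varrho}M$, the error operator only gets $\|\mathrm{id}-I_N\|\le 1+\frac{2}{1-1/\varrho}\le\frac{3}{1-1/\varrho}$ per factor, and $(2^{k-1}-1)\cdot 3^{k-1}$ exceeds $\big((k-1)+2^{k-1}-1\big)\cdot 2^{k-1}$ already at $k=3$, so the stated bound would not follow. The remedy comes from exactly the ingredients you list: since each series coefficient aliases into precisely one interpolation coefficient, $\sum_m|c_m|\le |a_0|+\sum_{m\ge 1}|a_m|\le M\,\frac{\varrho+1}{\varrho-1}$, hence
\begin{align*}
\max_{[-1,1]}|g-I_Ng|\;\le\; M+M\,\frac{\varrho+1}{\varrho-1}\;=\;\frac{2M}{1-1/\varrho},
\end{align*}
i.e.\ the amplification factor $\frac{2}{1-1/\varrho_{\sigma(j)}}$ holds for the error operator itself, and your product bound then delivers the proposition with room to spare. (Even more simply, $\|P_{k-1}-\mathrm{id}\|\le 1+\prod_{j<k}\frac{\varrho_{\sigma(j)}+1}{\varrho_{\sigma(j)}-1}$ suffices, without expanding into $2^{k-1}-1$ products.)
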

%\begin{proof}
%F
%\end{proof}
%
%
%
% \begin{theorem}%[Asymptotic error decay tensorized Chebyshev interpolation]
% \label{Asymptotic_error_decay_multidim_new}
%  Let $\mathcal{X}\ni x\mapsto f^x$ be a real valued function that has an analytic extension to some generalized Bernstein ellipse $B(\mathcal{X},\varrho)$ for some parameter vector $\varrho\in (1,\infty)^D$ and $\max_{x\in B(\mathcal{X},\varrho)}|f^x|\le V$. %Let $\tilde{\varrho}_i:=\frac{\varrho_i^{-N_{i}}}{\varrho_{i}-1}$ for $i=1,\ldots,D+1$ 
%  Then
%  \begin{align*}%\label{absch-TCana}
% \max_{x\in\mathcal{X}}\big|f^x - &I_{\overline{N}}(f^{(\cdot)})(x)\big|
%  \\
% &\le\sum_{i=1}^D 4V\frac{\varrho_i^{-N_i}}{\varrho_i-1} + \sum_{k=2}^D 4V\frac{\varrho_k^{-N_k}}{\varrho_k-1}\cdot 2^{k-1}  \frac{(k-1) + 2^{k-1}-1}{\prod_{j=1}^{k-1}(1-\frac{1}{\varrho_j})}.
%  \end{align*}
%  \end{theorem}
 \begin{proof}
We show the statement for an arbitrary $\sigma\in S_D$ and for ease of notation we use $\sigma(i)=i$ for $i=1,\ldots,D$. Obviously,  we can iteratively interpolate in the parameter in such a way that the error bound is minimized by choosing the corresponding $\sigma\in S_D$.
 
We prove the assertion of the theorem via induction over the dimension $D$ of the parameter domain. We assume the function $f$  is analytic in $[-1,1]^D$ and is analytically extendable to the open Bernstein ellipse $B([-1,1]^D,\varrho)$. For $D=1$ and $\mathcal{X}=[-1,1]$ the proof of the assertion is presented in \cite[Theorem 8.2]{Trefethen2013}. The generalization of the assertion to the case of a general parameter interval $\mathcal{X}\subset \rr$ is elementary and follows from a linear transformation as described in \cite[Proof of Theorem 2.2]{GassGlauMahlstedtMair2016}.

The key idea of the proof is to use the triangle inequality to estimate the interpolation error in $D+1$ components via the interpolation error in the $D+1$ component of the original function and the interpolation in the $D+1$ component of the already in $D$ components interpolated function. Hereby, in both cases the issue is basically reduced to an one-dimensional interpolation and the known theory from \cite[Theorem 8.2]{Trefethen2013} can be applied. The crucial step is to derive the bound of the in already in $D$ components interpolated function on the corresponding Bernstein ellipse.

Let us now assume the assertion is proven for dimension $1,\ldots,D$. Let
$\mathcal{X}^{D+1}:=[\underline{x}_1,\olp[1]]\times\ldots\times[\underline{x}_{D+1},\olp[D+1]]$ and let $f:\mathcal{X}\rightarrow\rr$ have an analytic extension to the generalized Bernstein ellipse $B(\mathcal{X}^{D+1},\varrho^{D+1})$ for some parameter vector $\varrho^{D+1}\in (1,\infty)^{D+1}$ and let $\max_{x\in B(\mathcal{X}^{D+1},\varrho^{D+1})}|f(x)|\le V$.
%In the following, we focus on a function $f:[-1,1]^{D+1}\rightarrow\mathbb{R}$ and prove the theorem. 
%As in the univariate case, the extension to a general parameter interval $\mathcal{X}\subset \mathbb{R}^{D+1}$ is elementary.
To set up notation, we write $x_1^D=(x_1,\ldots,x_D)$ and define in the following the Chebyshev interpolation operators. For interpolation only in the $i-$th component with $N$ Chebyshev points,
\begin{align*}
I_N^i(f)(x_1^{D+1}):=I_N(f(x_1,\ldots,x_{i-1},\cdot,x_{i+1},\ldots,x_{D+1}))(x_i).
\end{align*}
Analogously, interpolation only in $j$ components with $N_{k_1},\ldots,N_{k_j}$ Chebyshev points is denoted by
\begin{align*}
I_{N_{k_1},\ldots,N_{k_j}}^{j_1,\ldots,j_j}(f)(x_1^{D+1}):=I_{N_{k_j}}^{j_j}\circ\ldots\circ I_{N_{k_1}}^{j_1}(f)(x_1^{D+1}),
\end{align*}
and finally, the interpolation in all $D+1$ components with $N_1,\ldots, N_{D+1}$ Chebyshev points is
\begin{align*}
I_{N_1,\ldots, N_{D+1}}(f)(x_1^{D+1}):=I_{N_{D+1}}^{D+1}\circ\ldots\circ I_{N_{1}}^{1}(f)(x_1^{D+1}).
\end{align*}
In the following the norm $|\cdot|$ denotes the $\infty-$norm on $[-1,1]^{D+1}$. We are interested in the interpolation error
\begin{align*}
&|f(x_1^{D+1}) - I_{N_1,\ldots, N_{D+1}}(f)(x_1^{D+1})|\\
&\quad\quad\quad\le|f(x_1^{D+1}) - I^{D+1}_{N_{D+1}}(f)(x_1^{D+1})|+|I^{D+1}_{N_{D+1}}(f)(x_1^{D+1})-I_{N_1,\ldots, N_{D+1}}(f)(x_1^{D+1})|. 
\end{align*}
We first show that the first part as an one dimensional interpolation is bounded by, \cite[Theorem 8.2]{Trefethen2013},
\begin{align}
|f(x_1^{D+1}) - I^{D+1}_{N_{D+1}}(f)(x_1^{D+1})|\le 4V\frac{\varrho_{D+1}^{-N_{D+1}}}{\varrho_{D+1}-1}.\label{result_1D_d1}
\end{align}
In order to derive \eqref{result_1D_d1}, we have to show that the coefficients of the Chebyshev polynomial interpolation are bounded. Following \cite{Trefethen2013}, the on $x_{D+1}$ depending coefficient $a_{k_{D+1}}$ is defined as
\begin{align*}
a_{k_{D+1}}:=\frac{2^{\mathbbm{1}_{k_{D+1}>0}}}{\pi} \int_{-1}^1\frac{f(x_1^{D+1})T_{k_{D+1}}(p_{D+1})}{\sqrt{1-x_{D+1}^2}}dx_{D+1}.
\end{align*}
By using the same transformation as in the proof of \cite[Theorem 8.1]{Trefethen2013}, just adapted to the multidimensional setting, i.e.
\begin{align*}
x_i&=\frac{z_i+z_i^{-1}}{2},\quad i=1,\ldots D+1,\\
F(z_1,\ldots,z_{D+1})&=F(z_1^{-1},\ldots,z_{D+1}^{-1})=f(x_1,\ldots, x_{D+1}),
\end{align*}
we achieve for the estimation of the coefficient $a_{k_{D+1}}$,
\begin{align*}
|a_{k_{D+1}}|=\left|\frac{2^{-\mathbbm{1}_{k_{D+1}=0}}}{\pi i}\int_{|z_{D+1}|=\varrho_{D+1}}z_{D+1}^{-1-k_{D+1}}F(z_1,\ldots,z_{D+1})dz_{D+1}\right|.
\end{align*}
Here, we use that $F$ is bounded by the same constant as $f$, which is given by assumption, $|f(x_1^{D+1})|_{B([-1,1]^{D+1},\varrho)}\le V$. Therefore, analogously to \cite[Theorem 8.1]{Trefethen2013}, this leads to
\begin{align}
|a_{k_{D+1}}|\le 2\varrho_{D+1}^{-k_{D+1}}V.\label{Coefficient_Ddim}
\end{align}
This estimation can be used to derive \eqref{result_1D_d1} applying \cite[Theorem 8.2]{Trefethen2013}.

For the second part we use
\begin{align*}
|I^{D+1}_{N_{D+1}}(f)(x_1^{D+1})-I_{N_1,\ldots, N_{D+1}}(f)(x_1^{D+1})|=|I^{D+1}_{N_{D+1}}(f-I_{N_1,\ldots, N_{D}}^{1,\ldots,D}(f)(x_1^{D+1}))(x_1^{D+1})|.
\end{align*}
At this point we again apply the triangle inequality and achieve
\begin{align}
|I^{D+1}_{N_{D+1}}&(f-I_{N_1,\ldots, N_{D}}^{1,\ldots,D}(f)(x_1^{D+1}))(x_1^{D+1})|\notag\\ 
&\le|I^{D+1}_{N_{D+1}}(f-I_{N_1,\ldots, N_{D}}^{1,\ldots,D}(f)(x_1^{D+1}))(x_1^{D+1})\label{Induktion_Step1}-(f-I_{N_1,\ldots, N_{D}}^{1,\ldots,D}(f)(x_1^{D+1}))|\\
&\quad+|(f-I_{N_1,\ldots, N_{D}}^{1,\ldots,D}(f)(x_1^{D+1}))|\notag.
\end{align}
The term \eqref{Induktion_Step1} is basically an interpolation in the $D+1$ component of the function $(f-I_{N_1,\ldots, N_{D}}^{1,\ldots,D}(f)(x_1^{D+1}))$. An upper bound $\mathcal{M}(D)$ for this function is given in Lemma \ref{Lemma_Upper_Bound}. With this bound we can estimate the interpolation error of interpolating $(f(x_1^{D+1})-I_{N_1,\ldots, N_{D}}^{1,\ldots,D}(f)(x_1^{D+1}))$ in the component D+1,
\begin{align*}
|I^{D+1}_{N_{D+1}}&(f-I_{N_1,\ldots, N_{D}}^{1,\ldots,D}(f)(x_1^{D+1}))(x_1^{D+1})- (f-I_{N_1,\ldots, N_{D}}^{1,\ldots,D}(f)(x_1^{D+1}))|\\
&\le 4\mathcal{M}(D)\frac{\varrho_{D+1}^{-N_{D+1}}}{\varrho_{D+1}-1}
\end{align*}
 
The term $|(f(x_1^{D+1})-I_{N_1,\ldots, N_{D}}^{1,\ldots,D}(f)(x_1^{D+1}))|$ is the interpolation error in $D$ dimensions and we assume, that this one is by our induction hypothesis bounded, depending on $D$, i.e. 
\begin{align}|(f-I_{N_1,\ldots, N_{D}}^{1,\ldots,D}(f)(x_1^{D+1}))|\le B(D),\quad B(D)>0.\label{Old_error}
\end{align}

Collecting all parts, we achieve for the error of our interpolation in $D+1$ components,
\begin{align*}
|I^{D+1}_{N_{D+1}}(f-I_{N_1,\ldots, N_{D}}^{1,\ldots,D}(f)(x_1^{D+1}))(x_1^{D+1})|\le4V\frac{\varrho_{D+1}^{-N_{D+1}}}{\varrho_{D+1}-1}+B(D)+4\mathcal{M}(D)\frac{\varrho_{D+1}^{-N_{D+1}}}{\varrho_{D+1}-1}.
\end{align*}
Finally, if we start with $D=1$ and apply the presented procedure step-wise,  we get via straightforward induction ,
\begin{align*}
&B(D)=\sum_{i=1}^D 4V\frac{\varrho_i^{-N_i}}{\varrho_i-1} + \sum_{k=2}^D 4\mathcal{M}(k-1)\frac{\varrho_k^{-N_k}}{\varrho_k-1}.
\end{align*}
Naturally, we can further estimate the error by using $\frac{s_i}{\varrho_i}<1$ and resp. $(1-\frac{s_i}{\varrho_i})<1$ in the numerator,
\begin{align*}
B(D)&\le \sum_{i=1}^D 4V\frac{\varrho_i^{-N_i}}{\varrho_i-1} + \sum_{k=2}^D 4V\frac{\varrho_k^{-N_k}}{\varrho_k-1}\cdot 2^{k-1}  \frac{(k-1) + 2^{k-1}-1}{\prod_{j=1}^{k-1}(1-\frac{s_j}{\varrho_j})}.
\end{align*}
Recalling the definition of $s_i=1+\epsilon$ with $\epsilon\in(0,\min_{j=1}^D \varrho_j -1)$, the definition holds for any $\epsilon\in(0,\min_{j=1}^D\varrho_j -1)$ and therefore also for $\lim_{\epsilon\to 0}$
\begin{align*}
B(D)\le&\lim_{\epsilon\to 0}\sum_{i=1}^D 4V\frac{\varrho_i^{-N_i}}{\varrho_i-1} + \sum_{k=2}^D 4V\frac{\varrho_k^{-N_k}}{\varrho_k-1}\cdot 2^{k-1}  \frac{(k-1) + 2^{k-1}-1}{\prod_{j=1}^{k-1}(1-\frac{1+\epsilon}{\varrho_j})}\\
=&\sum_{i=1}^D 4V\frac{\varrho_i^{-N_i}}{\varrho_i-1} + \sum_{k=2}^D 4V\frac{\varrho_k^{-N_k}}{\varrho_k-1}\cdot 2^{k-1}  \frac{(k-1) + 2^{k-1}-1}{\prod_{j=1}^{k-1}(1-\frac{1}{\varrho_j})}.
\end{align*}
\end{proof}
In the following lemmata, we use the following notation $x_1^M=(x_1,\ldots,x_M)$ and the convention $\frac{N}{0}=\infty,\ N\in\mathbb{N}^{+}$.
\begin{lemma}
\label{Lemma_Trefethen_Polynom_1D}
 Let $\mathcal{X}\ni x_1^M \mapsto f(x_1^M)$ be a real valued function that has an analytic extension to some generalized Bernstein ellipse $B(\mathcal{X},\varrho)$ for some parameter vector $\varrho\in (1,\infty)^{M}$.\\
 Then the Chebyshev polynomial interpolation $I_N^1(f)(x_1^M)$ is given by,
 \begin{align}
 I_N^1(f)(x_1^M)&=\sum_{k=0}^{N}a_k(x_2^M) T_k(x_1)+\sum_{k=N+1}^{\infty} a_k(x_2^M) T_{m(k,N)}(x_1),\label{Cheby_Series_Error1}
 \end{align}
 where $m(k,N)=|(k+N-1)(mod2N)-(N-1)|$ and $a_k(x_2^M)=\frac{2}{\pi}\int_{-1}^1 f(x_1^M)\frac{T_k(x_1)}{\sqrt{1-x_1^2}}dx_1$
\end{lemma}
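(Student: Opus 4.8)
The plan is to prove this as the multivariate, parameter-carrying version of the classical Chebyshev \emph{aliasing} formula (the additional variables $x_2^M=(x_2,\ldots,x_M)$ enter only as parameters, so the one-dimensional argument in $x_1$ applies verbatim for each fixed $x_2^M\in[-1,1]^{M-1}$; cf.\ the aliasing phenomenon in \cite{Trefethen2013}). First I would fix $x_2^M$ and expand $f$ in its Chebyshev series in the variable $x_1$,
\begin{align*}
f(x_1^M)=\tfrac12 a_0(x_2^M)+\sum_{k=1}^\infty a_k(x_2^M)\,T_k(x_1),
\end{align*}
with $a_k(x_2^M)$ as stated. Since $f(\cdot,x_2^M)$ is analytic on the Bernstein ellipse $B([-1,1],\varrho_1)$, the very computation that produced \eqref{Coefficient_Ddim} gives the geometric decay $|a_k(x_2^M)|\le 2V\varrho_1^{-k}$; as $|T_k|\le 1$ on $[-1,1]$, the series converges absolutely and uniformly in $x_1$, which is what legitimises the termwise regrouping used below.

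The key step is the aliasing identity on the Chebyshev--Lobatto grid $x_{1,j}=\cos(j\pi/N)$, $j=0,\ldots,N$. From $T_k(\cos\theta)=\cos(k\theta)$ one gets $T_k(x_{1,j})=\cos(kj\pi/N)$, and $k\mapsto\cos(kj\pi/N)$ is $2N$-periodic in $k$ and even under $k\mapsto 2N-k$. I would then check that the stated index $m(k,N)=|(k+N-1)\bmod 2N-(N-1)|$ is exactly the representative in $\{0,1,\ldots,N\}$ obtained by reducing $k$ modulo $2N$ and reflecting any value exceeding $N$ back into range; a short case distinction ($k\bmod 2N\le N$ versus $>N$) then yields
\begin{align*}
T_k(x_{1,j})=T_{m(k,N)}(x_{1,j})\qquad\text{for all }j=0,\ldots,N.
\end{align*}

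With this in hand I would conclude by uniqueness of polynomial interpolation. For each fixed $x_2^M$ the right-hand side of \eqref{Cheby_Series_Error1} is a polynomial in $x_1$ of degree at most $N$, since every aliased index satisfies $m(k,N)\le N$. Evaluating it at a node $x_{1,j}$ and invoking the aliasing identity turns each $T_{m(k,N)}(x_{1,j})$ back into $T_k(x_{1,j})$; by absolute convergence the regrouped series collapses to $\tfrac12 a_0(x_2^M)+\sum_{k\ge1}a_k(x_2^M)T_k(x_{1,j})=f(x_{1,j}^M)$. Hence this degree-$\le N$ polynomial agrees with $f$ at all $N+1$ Chebyshev nodes and therefore coincides with the unique interpolant $I_N^1(f)(x_1^M)$, which is the claim.

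The main obstacle is essentially bookkeeping rather than analysis: matching the closed form of $m(k,N)$ to the fold-and-reflect reduction of the cosine, and keeping the halving of the $k=0$ term consistent between the Chebyshev series and \eqref{Cheby_Series_Error1} (with the coefficient normalisation used here, the $k=0$ contribution carries the customary factor $\tfrac12$ in both places, which is the only reading under which the identity is correct). The single genuinely analytic ingredient, namely termwise evaluation and regrouping of the infinite series at the nodes, is already supplied by the geometric coefficient decay inherited from analyticity, so no further convergence estimates are required.
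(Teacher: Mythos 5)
Your proof is correct, and it is more self-contained than the paper's. The paper's own proof is essentially a citation: it quotes the interpolation-error formula $f - I_N^1(f) = \sum_{k=N+1}^{\infty} a_k(x_2^M)\,\big(T_k(x_1) - T_{m(k,N)}(x_1)\big)$ from Trefethen's Equation (4.9), substitutes the Chebyshev series of $f$ in the variable $x_1$, and rearranges terms. What you do instead is prove that quoted formula from scratch: you establish the node-level aliasing identity $T_k(x_{1,j}) = T_{m(k,N)}(x_{1,j})$ on the Chebyshev--Lobatto grid $x_{1,j}=\cos(j\pi/N)$, observe that the right-hand side of \eqref{Cheby_Series_Error1} is a polynomial of degree at most $N$ in $x_1$ (since $m(k,N)\le N$) which agrees with $f$ at the $N+1$ nodes, and conclude by uniqueness of the interpolant, with the geometric decay $|a_k(x_2^M)|\le 2V\varrho_1^{-k}$ from analyticity legitimising the termwise regrouping. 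So the underlying mathematics (Chebyshev aliasing, with $x_2^M$ as a passive parameter) is the same in both, but you inline the proof of the result the paper outsources; your version costs more space, while the paper's buys brevity at the price of never verifying that $m(k,N)$ is the correct fold map or that the series manipulations converge --- steps you make explicit. A further merit of your write-up is that you flag the normalization inconsistency in the statement: with $a_k(x_2^M)=\frac{2}{\pi}\int_{-1}^1 f(x_1^M)\,T_k(x_1)/\sqrt{1-x_1^2}\,dx_1$ for \emph{all} $k$ including $k=0$, the identity only holds if the $k=0$ term is halved (equivalently, the prefactor should be $2^{\mathbbm{1}_{k>0}}/\pi$, as the paper itself uses in Lemma \ref{Lemma_Interpolation_D_Components}); the paper's proof writes $f=\sum_{k=0}^{\infty}a_k T_k$ without the halving and thus silently carries the same inconsistency, whereas you state the correct reading.
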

\begin{proof}
Following \cite[Equation (4.9)]{Trefethen2013}, from aliasing properties of Chebyshev polynomials  it results that 
\begin{align*}
f(x_1^M)-I_N^1(f)(x_1^M)=\sum_{k=N+1}^{\infty} a_k(x_2^M) (T_k(x_1)-T_{m(k,N)}(x_1)).
\end{align*}
By writing the Chebyshev series for $f(x_1^M)$, see \cite{Trefethen2013}, we get,
\begin{align*}
&\sum_{k=0}^{\infty}a_k(x_2^M) T_k(x_1)-I_N^1(f)(x_1^M)=\sum_{k=N+1}^{\infty} a_k(x_2^M) (T_k(x_1)-T_{m(k,N)}(x_1)),
\end{align*}
and rearranging terms yields \eqref{Cheby_Series_Error1}.
\end{proof}
\begin{lemma}\label{Lemma_Upper_Bound}
 Let $\mathcal{X}\ni x_1^M \mapsto f(x_1^{D+1})$ be a real valued function that has an analytic extension to some generalized Bernstein ellipse $B(\mathcal{X},\varrho)$ for some parameter vector $\varrho\in (1,\infty)^{D+1}$. Then
 \begin{align*}
&\sup_{x_{D+1}\in B([-1,1],\varrho_{D+1})}|f(x_1^{D+1})-I_{N_1,\ldots, N_{D}}^{1,\ldots,D}(f)(x_1^{D+1})|\le \mathcal{M}(D)\\
&:=2^DV  \frac{\sum_{i=1}^D \left(\frac{s_i}{\varrho_i}\right)^{Ni+1}+\sum_{\sigma\in\{0,1\}^D \setminus\{0\}^D}\prod_{\delta:\sigma_{\delta}=0}(1-\left(\frac{s_{\delta}}{\varrho_{\delta}}\right)^{N_{\delta}+1} \prod_{\delta:\sigma_{\delta}=1}\left(\frac{s_{\delta}}{\varrho_{\delta}}\right)^{N_{\delta}+1}}{\prod_{j=1}^D(1-\frac{s_j}{\varrho_j})}
 \end{align*}
\end{lemma}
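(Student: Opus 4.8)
The plan is to expand the once-interpolated residual $g:=f-I_{N_1,\ldots,N_D}^{1,\ldots,D}(f)$ into a multivariate Chebyshev series in the interpolated coordinates and to estimate it termwise, keeping $x_{D+1}$ on $B([-1,1],\varrho_{D+1})$ and the remaining coordinates on the Bernstein ellipses of radii $s_i$ (with $1<s_i<\varrho_i$), which contain $[-1,1]$ and hence cover the situation needed in the proof of Proposition \ref{Asymptotic_error_decay_multidim_new_permu}. First I would apply Lemma \ref{Lemma_Trefethen_Polynom_1D} successively in each of $x_1,\ldots,x_D$. Since the one-dimensional operators $I^i_{N_i}$ act on distinct variables they commute, so both $f$ and $I_{N_1,\ldots,N_D}^{1,\ldots,D}(f)$ get represented as Chebyshev series in $(x_1,\ldots,x_D)$ with coefficients $a_{k_1,\ldots,k_D}(x_{D+1})$ depending analytically on $x_{D+1}$; in the interpolant each factor $T_{k_i}(x_i)$ is replaced by its aliased counterpart $T_{m(k_i,N_i)}(x_i)$ whenever $k_i>N_i$, while for $k_i\le N_i$ the two factors coincide.

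Subtracting the two series, the contributions of the purely low multi-indices (all $k_i\le N_i$) cancel, so that $g$ is a sum over multi-indices having at least one high coordinate $k_i>N_i$. Splitting it into its genuine-mode part $\sum a_{k_1,\ldots,k_D}\prod_i T_{k_i}(x_i)$ and its aliased part $\sum a_{k_1,\ldots,k_D}\prod_i T_{m(k_i,N_i)}(x_i)$ and applying the triangle inequality reduces the task to estimating two absolutely convergent series. For the coefficients I would carry out the contour deformation $x_i=(z_i+z_i^{-1})/2$ used for \eqref{Coefficient_Ddim}, now in all $D$ integration variables with contours $|z_i|=\varrho_i$ while $x_{D+1}$ stays on $B([-1,1],\varrho_{D+1})$; the hypothesis $\max_{B(\mathcal X,\varrho)}|f|\le V$ controls the integrand on the whole polyellipse and yields the uniform estimate $|a_{k_1,\ldots,k_D}(x_{D+1})|\le 2^D V\prod_{i=1}^D\varrho_i^{-k_i}$. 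For the polynomial factors, evaluating the first $D$ coordinates on the ellipses of radii $s_i$ and using $|T_k(x_i)|\le\tfrac12(s_i^{k}+s_i^{-k})\le s_i^{k}$ gives $|T_{k_i}(x_i)|\le s_i^{k_i}$, and for the aliased factors $|T_{m(k_i,N_i)}(x_i)|\le s_i^{N_i}$ since $m(k_i,N_i)\le N_i$.

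Combining these bounds produces, in each coordinate, a geometric series in the ratio $s_i/\varrho_i$, which converges precisely because $1<s_i<\varrho_i$. Summing the low part $\sum_{k_i\le N_i}(s_i/\varrho_i)^{k_i}=\frac{1-(s_i/\varrho_i)^{N_i+1}}{1-s_i/\varrho_i}$ and the high part $\sum_{k_i>N_i}(s_i/\varrho_i)^{k_i}=\frac{(s_i/\varrho_i)^{N_i+1}}{1-s_i/\varrho_i}$ over the multi-indices with at least one high coordinate regroups the genuine-mode part into the sum over $\sigma\in\{0,1\}^D\setminus\{0\}^D$, attaching the factor $(s_\delta/\varrho_\delta)^{N_\delta+1}$ to the high coordinates ($\sigma_\delta=1$) and $(1-(s_\delta/\varrho_\delta)^{N_\delta+1})$ to the low ones ($\sigma_\delta=0$); the aliased part is estimated analogously and contributes the remaining $\sum_{i=1}^D(s_i/\varrho_i)^{N_i+1}$. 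The common denominator $\prod_{j=1}^D(1-s_j/\varrho_j)$ produced by the geometric sums, together with the prefactor $2^D V$ from the coefficient estimate, then yields exactly $\mathcal M(D)$.

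The step I expect to be the main obstacle is the combinatorial bookkeeping of the difference of the two product structures: one must check that after the cancellation of the low modes the surviving terms regroup into precisely the stated $2^D-1$ subset contributions plus the aliasing correction, and that the crude estimates for the aliased factors $T_{m(k_i,N_i)}(x_i)$ do not destroy the factorisation that makes the geometric summation separate across coordinates. Everything else — the $D$-dimensional coefficient estimate and the summation of the geometric series — is a direct transcription of the one-dimensional arguments in \cite[Theorems 8.1 and 8.2]{Trefethen2013}, so the real work lies in organising the multivariate expansion so that the separation into the $\sigma$-indexed terms becomes transparent.
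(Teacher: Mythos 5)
Your proposal follows essentially the same route as the paper: its Lemma \ref{Lemma_Interpolation_D_Components} is precisely your successive application of Lemma \ref{Lemma_Trefethen_Polynom_1D} in $x_1,\ldots,x_D$, followed by cancellation of the purely low multi-indices, the triangle inequality, the coefficient bound $2^D V\prod_{i=1}^D\varrho_i^{-k_i}$ via contour deformation, Bernstein's inequality on the ellipses of radii $s_i$, and termwise geometric summation. The only cosmetic difference is that you attribute the $\sigma$-indexed sum to the genuine-mode part and the term $\sum_{i=1}^D(s_i/\varrho_i)^{N_i+1}$ to the aliased part, while the paper does the reverse (bounding the genuine-mode tail by the union bound over which coordinate is high and partitioning the aliased part by $\sigma$); since either estimate applies to either part, the resulting bound $\mathcal{M}(D)$ is identical.
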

\begin{proof}
Starting with,
\begin{align*}
\sup_{x_{D+1}\in B([-1,1],\varrho_{D+1})}|f(x_1^{D+1})-I_{N_1,\ldots, N_{D}}^{1,\ldots,D}(f)(x_1^{D+1})|,
\end{align*}
we express the interpolation of $f$ in $D$ components as in Lemma \ref{Lemma_Interpolation_D_Components},
\begin{align*}
\sup_{x_{D+1}\in B([-1,1],\varrho_{D+1})}\bigg|f(x_1^{D+1})
-\sum_{\sigma\in\{0,1\}^D}\sum_{\delta=1}^D\sum_{k_{\delta}=(N_{\delta}+1)\cdot\sigma_{\delta}}^{\frac{N_{\delta}}{1-\sigma_{\delta}}}I(k_1^{D},x_{D+1})\tau(k_1^{D},\sigma_1^{D},x_1^{D})\bigg|.
\end{align*}
Following \cite{Trefethen2013} and as used in Lemma \ref{Lemma_Trefethen_Polynom_1D}, we can express $f$ in the following way,
\begin{align*}
f(x_1^{D+1})=\sum_{\delta=1}^D \sum_{k_{\delta}=0}^{\infty}I(k_1^{D},x_{D+1})\tau(k_1^{D},\sigma_1^{D}=0,x_1^{D}),
\end{align*}
leading to,
\begin{align*}
\sup_{x_{D+1}\in B([-1,1],\varrho_{D+1})}&|f(x_1^{D+1})-I_{N_1,\ldots, N_{D}}^{1,\ldots,D}(f)(x_1^{D+1})|\\
&=\sup_{x_{D+1}\in B([-1,1],\varrho_{D+1})}\bigg|\sum_{\delta=1}^D \sum_{k_{\delta}=0}^{\infty}I(k_1^{D},x_{D+1})\tau(k_1^{D},\sigma_1^{D}=0,x_1^{D})\\
&\quad-\sum_{\sigma\in\{0,1\}^D}\sum_{\delta=1}^D\sum_{k_{\delta}=(N_{\delta}+1)\cdot\sigma_{\delta}}^{\frac{N_{\delta}}{1-\sigma_{\delta}}}I(k_1^{D},x_{D+1})\tau(k_1^{D},\sigma_1^{D},x_1^{D})\bigg|.
\end{align*}
In the next step, we use from the second summand the part $\sigma=\{0\}^D$, subtract it from the subtrahend and use the triangle inequality. 
\begin{align*}
&\sup_{x_{D+1}\in B([-1,1],\varrho_{D+1})}|f(x_1^{D+1})-I_{N_1,\ldots, N_{D}}^{1,\ldots,D}(f)(x_1^{D+1})|\\
&=\sup_{x_{D+1}\in B([-1,1],\varrho_{D+1})}\bigg|\sum_{i=1}^D \left(\sum_{k_i=N_i+1}^{\infty}\sum_{j=1,j\neq i}^D\sum_{k_{j}=0}^{\infty}I(k_1^{D},x_{D+1})\tau(k_1^{D},\sigma_1^{D}=0,x_1^{D})\right) \\
&\quad-\sum_{\sigma\in\{0,1\}^D \setminus\{0\}^D}\sum_{\delta=1}^D\sum_{k_{\delta}=(N_{\delta}+1)\cdot\sigma_{\delta}}^{\frac{N_{\delta}}{1-\sigma_{\delta}}}I(k_1^{D},x_{D+1})\tau(k_1^{D},\sigma_1^{D},x_1^{D})\bigg|\\
&\le \sup_{x_{D+1}\in B([-1,1],\varrho_{D+1})}\bigg|\sum_{i=1}^D \left(\sum_{k_i=N_i+1}^{\infty}\sum_{j=1,j\neq i}^D\sum_{k_{j}=0}^{\infty}I(k_1^{D},x_{D+1})\tau(k_1^{D},\sigma_1^{D}=0,x_1^{D})\right)\bigg|\\
&+\quad\bigg|\sum_{\sigma\in\{0,1\}^D \setminus\{0\}^D}\sum_{\delta=1}^D\sum_{k_{\delta}=(N_{\delta}+1)\cdot\sigma_{\delta}}^{\frac{N_{\delta}}{1-\sigma_{\delta}}}I(k_1^{D},x_{D+1})\tau(k_1^{D},\sigma_1^{D},x_1^{D})\bigg|
\end{align*}
To estimate the supremum, we first need estimations for $|I(k_1^{D},x_{D+1})|$ and \\$|\tau(k_1^{D},\sigma_1^{D},x_1^{D})|$. 
\begin{align*}
|I(k_1^{D},&x_{D+1})|=\bigg|\prod_{i=1}^D\frac{2^{\mathbbm{1}_{k_i>0}}}{\pi}\int_{[-1,1]^D}f(x_1^{D+1})\prod_{j=1}^D\frac{T_{k_j}(x_j)}{\sqrt{1-x_j^2}}d(x_1^D)\bigg|\\
=&\bigg|\prod_{i=2}^D\frac{2^{\mathbbm{1}_{k_i>0}}}{\pi}\int_{[-1,1]^{D-1}}\frac{2^{\mathbbm{1}_{k_1>0}}}{\pi}\int_{-1}^1f\frac{T_{k_1}(x_1)}{\sqrt{1-x_1^2}}d(x_1)\prod_{j=2}^D\frac{T_{k_j}(x_j)}{\sqrt{1-x_j^2}}d(x_2^D)\bigg|. 
\end{align*}
Analogously to deriving the estimation \eqref{Coefficient_Ddim}, we can estimate the integral with respect to $x_1$ as $\frac{2^{\mathbbm{1}_{k_1>0}}}{\pi}\int_{-1}^1f\frac{T_{k_1}(x_1)}{\sqrt{1-x_1^2}}d(p_1)\le 2V\varrho_1^{-k_1}$. The remaining $D-1$ dimensional integral can in a similar way be estimated as $D-1$ one-dimensional integrals with $V=1$. Altogether, this results in the following estimation for $|I(k_1,\ldots,k_D)|$,
\begin{align*}
|I(k_1^{D},x_{D+1})|\le 2^D V \prod_{i=1}^D\varrho_{i}^{-k_i}.
\end{align*}
For $|\tau(k_1^{D},\sigma_1^{D}=0,x_1^{D})|$, we make use of Bernstein's inequality, using that the norm of each Chebyshev polynomial is bounded by 1 on $[-1,1]$. For each $i=1,\ldots,D$ we choose a Bernstein ellipse with radius $s_i$ such that $1<s_i<\varrho_i$. Here, we define $s_i=1+\epsilon$ and this yields for $x:\ x_i\in B([-1,1],s_i),\ i=1,\ldots,D$,
\begin{align*}
|\tau(k_1^{D},\sigma_1^{D},x_1^{D})|=\prod_{\delta:\sigma_{\delta}=0}T_{k_{\delta}}(x_{\delta})\prod_{\delta:\sigma_{\delta}=1}T_{m_{\delta}(k_{\delta})}(x_{\delta})\le\prod_{\delta:\sigma_{\delta}=0}s_{\delta}^{k_{\delta}}\prod_{\delta:\sigma_{\delta}=1}s_{\delta}^{m_{\delta}(k_{\delta})}.
\end{align*}

By definition, it holds $m_{\delta}(k_{\delta})\le k_{\delta}$. This leads to
\begin{align*}
|\tau(k_1^{D},\sigma_1^{D}=0,x_1^{D})|\le&\prod_{i=1}^D s_i^{k_i}.
\end{align*}
Using both estimates leads to
\begin{align*}
\sup_{x_{D+1}\in B([-1,1],\varrho_{D+1})}&|f(x_1^{D+1})-I_{N_1,\ldots, N_{D}}^{1,\ldots,D}(f)(x_1^{D+1})|\\
&\le \sup_{x_{D+1}\in B([-1,1],\varrho_{D+1})}\bigg|\sum_{i=1}^D \left(\sum_{k_i=N_i+1}^{\infty}\sum_{j=1,j\neq i}^D\sum_{k_{j}=0}^{\infty}2^DV\prod_{l=1}^D\left(\frac{s_l}{\varrho_l}\right)^{k_l} \right)\bigg|\\
&\quad\quad +\bigg|\sum_{\sigma\in\{0,1\}^D \setminus\{0\}^D}\sum_{\delta=1}^D\sum_{k_{\delta}=(N_{\delta}+1)\cdot\sigma_{\delta}}^{\frac{N_{\delta}}{1-\sigma_{\delta}}}2^DV\prod_{l=1}^D\left(\frac{s_l}{\varrho_l}\right)^{k_l}\bigg|.
\end{align*}
Due to $s_i<\varrho_i$ we can apply the convergence results for the geometric series. This leads to
\begin{align*}
&\sup_{x_{D+1}\in B([-1,1],\varrho_{D+1})}|f(x_1^{D+1})-I_{N_1,\ldots, N_{D}}^{1,\ldots,D}(f)(x_1^{D+1})|\\
&\quad\quad\quad\quad\le\mathcal{M}(D):= \sup_{x_{D+1}\in B([-1,1],\varrho_{D+1})} \bigg|2^DV\sum_{i=1}^D\frac{\left(\frac{s_i}{\varrho_i}\right)^{Ni+1}}{\prod_{j=1}^D(1-\frac{s_j}{\varrho_j})}\bigg|\\
&\quad\quad\quad\quad\quad +\bigg|2^DV \sum_{\sigma\in\{0,1\}^D \setminus\{0\}^D} \frac{\prod_{\delta:\sigma_{\delta}=0}(1-\left(\frac{s_{\delta}}{\varrho_{\delta}}\right)^{N_{\delta}+1} \prod_{\delta:\sigma_{\delta}=1}\left(\frac{s_{\delta}}{\varrho_{\delta}}\right)^{N_{\delta}+1}}{\prod_{j=1}^D(1-\frac{s_j}{\varrho_j})}\bigg|\\
&=2^DV  \frac{\sum_{i=1}^D \left(\frac{s_i}{\varrho_i}\right)^{Ni+1}+\sum_{\sigma\in\{0,1\}^D \setminus\{0\}^D}\prod_{\delta:\sigma_{\delta}=0}(1-\left(\frac{s_{\delta}}{\varrho_{\delta}}\right)^{N_{\delta}+1} \prod_{\delta:\sigma_{\delta}=1}\left(\frac{s_{\delta}}{\varrho_{\delta}}\right)^{N_{\delta}+1}}{\prod_{j=1}^D(1-\frac{s_j}{\varrho_j})}.
\end{align*}
\end{proof}

\begin{lemma}
\label{Lemma_Interpolation_D_Components}
 Let $\mathcal{X}\ni x_1^M \mapsto f(x_1^M)$ be a real valued function that has an analytic extension to some generalized Bernstein ellipse $B(\mathcal{X},\varrho)$ for some parameter vector $\varrho\in (1,\infty)^{M}$. For$D\le M$ let
 \begin{align*}
I(k_1^{D},x_{D+1}^{M})&=\prod_{i=1}^D\frac{2^{\mathbbm{1}_{k_i>0}}}{\pi}\int_{[-1,1]^D}f(x_1^{M})\prod_{j=1}^D\frac{T_{k_j}(x_j)}{\sqrt{1-x_j^2}}d(x_1,\ldots,x_D),\\
\tau(k_1^{D},\sigma_1^{D},x_1^{D})&=\prod_{\delta:\sigma_{\delta}=0}T_{k_{\delta}}(x_{\delta})\prod_{\delta:\sigma_{\delta}=1}T_{m_{\delta}}(x_{\delta}),
\end{align*}
then the interpolation of $f(x_1^{M})$ in $D$ components is given by:
\begin{align*}
I_{N_1,\ldots,N_D}^{1,\ldots,D}(f)(x_1^M)=\sum_{\sigma\in\{0,1\}^D}\sum_{\delta=1}^D\sum_{k_{\delta}=(N_{\delta}+1)\cdot\sigma_{\delta}}^{\frac{N_{\delta}}{1-\sigma_{\delta}}}I(k_1^{D},x_{D+1}^{M})\tau(k_1^{D},\sigma_1^{D},x_1^{D}).
\end{align*}
\end{lemma}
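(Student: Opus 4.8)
The plan is to prove the identity by induction on the number $D$ of interpolated components, treating the remaining variables $x_{D+1}^M$ as passive parameters throughout; the engine is the univariate aliasing formula of Lemma \ref{Lemma_Trefethen_Polynom_1D} together with the fact that the operators $I_{N_\delta}^\delta$ act on pairwise distinct variables and hence commute. For $D=1$ the claimed formula is exactly Lemma \ref{Lemma_Trefethen_Polynom_1D}: the index set $\{0,1\}$ splits into $\sigma_1=0$, contributing $\sum_{k_1=0}^{N_1} I(k_1,x_2^M)T_{k_1}(x_1)$, and $\sigma_1=1$, contributing $\sum_{k_1=N_1+1}^{\infty} I(k_1,x_2^M)T_{m(k_1,N_1)}(x_1)$. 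One only has to identify the coefficient $a_{k_1}(x_2^M)$ of Lemma \ref{Lemma_Trefethen_Polynom_1D} with $I(k_1,x_2^M)$, which is immediate from their defining integrals, and to note that the convention $\frac{N}{0}=\infty$ makes the two summation ranges match.

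For the inductive step I would write $I_{N_1,\ldots,N_D}^{1,\ldots,D}=I_{N_D}^D\circ I_{N_1,\ldots,N_{D-1}}^{1,\ldots,D-1}$ and substitute the induction hypothesis for the inner operator, obtaining a sum over $\sigma'\in\{0,1\}^{D-1}$ and over multiindices $k_1^{D-1}$ in the corresponding ranges, with coefficient $I(k_1^{D-1},x_D^M)$ and Chebyshev factor $\tau(k_1^{D-1},\sigma_1^{D-1},x_1^{D-1})$. The outer operator $I_{N_D}^D$ acts only on $x_D$, so by linearity it passes through $\tau(k_1^{D-1},\sigma_1^{D-1},x_1^{D-1})$, which is constant in $x_D$, and acts solely on the $x_D$-dependence of the coefficient $I(k_1^{D-1},x_D^M)$. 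Viewing the latter as a function of $x_D$ and expanding it in its own Chebyshev series, Lemma \ref{Lemma_Trefethen_Polynom_1D} applied in the $D$-th variable replaces this series by its truncated-plus-aliased form, introducing the new entry $\sigma_D\in\{0,1\}$, the summation over $k_D$ in the range dictated by $\sigma_D$, and the factor $T_{k_D}(x_D)$ or $T_{m(k_D,N_D)}(x_D)$. Since the $x_D$-Chebyshev coefficient of $I(k_1^{D-1},x_D^M)$ is precisely the full $D$-dimensional integral $I(k_1^D,x_{D+1}^M)$, relabelling $\sigma=(\sigma',\sigma_D)\in\{0,1\}^D$ combines the pieces into the asserted formula.

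The main obstacle, and the only genuinely analytic point, is the legitimacy of applying the finite-dimensional operator $I_{N_D}^D$ term by term to the infinite Chebyshev series in $x_D$ and of interchanging it with the remaining summations. This is justified because analyticity of $f$ on the Bernstein ellipse forces geometric decay of the Chebyshev coefficients, as in the bound \eqref{Coefficient_Ddim}, so every series in sight converges uniformly on $[-1,1]$; as $I_{N_D}^D$ is a bounded linear map on $C([-1,1])$, being evaluation at the $N_D+1$ Chebyshev nodes followed by polynomial reconstruction, it commutes with uniformly convergent sums, while its commutation with the polynomials in the other variables is clear because the operators act on disjoint variables. Everything else is index bookkeeping to check that the $2^D$ low/high combinations across the coordinates are enumerated exactly by $\sigma\in\{0,1\}^D$.
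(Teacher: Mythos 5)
Your proof is correct and follows essentially the same route as the paper's: induction over $D$ with Lemma \ref{Lemma_Trefethen_Polynom_1D} as the base case, the factorization $I_{N_1,\ldots,N_D}^{1,\ldots,D}=I_{N_D}^{D}\circ I_{N_1,\ldots,N_{D-1}}^{1,\ldots,D-1}$, application of the univariate aliasing formula in the $D$-th variable, and identification of the $x_D$-Chebyshev coefficient of $I(k_1^{D-1},x_D^M)$ with the full integral $I(k_1^D,x_{D+1}^M)$. The only difference is cosmetic ordering (you push $I_{N_D}^D$ through the series before aliasing, while the paper aliases the composite function and then substitutes the induction hypothesis into the coefficient integral), and your explicit uniform-convergence justification for the interchange is a point the paper leaves implicit.
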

\begin{proof}
We proof this lemma via induction over the dimension $D$. For $D=1$ it follows from Lemma \ref{Lemma_Trefethen_Polynom_1D},
\begin{align*}
I_{N_1}^{1}(f)(x_1^M)=&\sum_{k_1=0}^{N_1}\frac{2^{\mathbbm{1}_{k_1>0}}}{\pi}\int_{[-1,1]}f(x_1^{M})\frac{T_{k_1}(x_1)}{\sqrt{1-x_1^2}}dx_1 T_{k_1}(x_1)\\
&\quad\quad+\sum_{k_1=N_1+1}^{\infty}\frac{2^{\mathbbm{1}_{k_1>0}}}{\pi}\int_{[-1,1]}f(x_1^{M})\frac{T_{k_1}(x_1)}{\sqrt{1-x_1^2}}dx_1 T_{m_1}(x_1).
\end{align*}
Embedded in the introduced notation we get for $D=1$,
\begin{align*}
I_{N_1}^{1}(f)(x_1^M)=\sum_{\sigma\in\{0,1\}}\sum_{\delta=1}^1\sum_{k_{\delta}=(N_{\delta}+1)\cdot\sigma_{\delta}}^{\frac{N_{\delta}}{1-\sigma_{\delta}}}I(k_1^1,x_2^M)\tau(k_1^1,\sigma_1^1,x_1^1).
\end{align*}
For the induction step from $D-1$ to $D$, we assume the interpolation in $D-1$ components is given by
\begin{align*}
I_{N_1,\ldots,N_{D-1}}^{1,\ldots,{D-1}}(f)(x_1^M)=\sum_{\sigma\in\{0,1\}^{D-1}}\sum_{\delta=1}^{D-1}\sum_{k_{\delta}=(N_{\delta}+1)\cdot\sigma_{\delta}}^{\frac{N_{\delta}}{1-\sigma_{\delta}}}I(k_1^{D-1},x_{D}^M)\tau(k_1^{D-1},\sigma_1^{D-1},x_1^{D-1}).
\end{align*}
For the interpolation in $D$ components we make use of
\begin{align*}
I^{1,\ldots,D}_{N_1,\ldots, N_{D}}(f)(x_1^{M})=I_{N_{D}}^{D}\circ\ldots\circ I_{N_{1}}^{1}(f)(x_1^{M})=I_{N_{D}}^{D}\circ I_{N_1,\ldots,N_{D-1}}^{1,\ldots,{D-1}}(f)(x_1^M).
\end{align*}
As for $D=1$ we apply \cite[p.27]{Trefethen2013} and this leads to
\begin{align*}
I_{N_1,\ldots, N_{D}}(f)(x_1^{D})=&\sum_{k_D=0}^{N_D}\frac{2^{\mathbbm{1}_{k_D>0}}}{\pi}\int_{-1}^1 I_{N_1,\ldots,N_{D-1}}^{1,\ldots,{D-1}}(f)(x_1^M) \frac{T_{k_D}(x_D)}{\sqrt{1-x_D^2}}dx_D T_{k_D}(x_D)\\
+&\sum_{k_D=N_D+1}^{\infty}\frac{2^{\mathbbm{1}_{k_D>0}}}{\pi}\int_{-1}^1 I_{N_1,\ldots,N_{D-1}}^{1,\ldots,{D-1}}(f)(x_1^M) \frac{T_{k_D}(x_D)}{\sqrt{1-x_D^2}}dx_D T_{m_D}(x_D).
\end{align*}
By the induction hypothesis and the definitions of $I(k_1^{D-1},x_D^M)$ and\\ $\tau(k_1^{D-1},\sigma_1^{D-1},x_1^{D-1})$, we achieve,
\begin{align*}
&\int_{-1}^1 I_{N_1,\ldots,N_{D-1}}^{1,\ldots,{D-1}}(f)(x_1^M) \frac{T_{k_D}(x_D)}{\sqrt{1-x_D^2}}dx_D\\
&\quad=\int_{[-1,1]} \sum_{\sigma\in\{0,1\}^{D-1}}\sum_{\delta=1}^{D-1}\sum_{k_{\delta}=(N_{\delta}+1)\cdot\sigma_{\delta}}^{\frac{N_{\delta}}{1-\sigma_{\delta}}}I(k_1^{D-1},x_D^M)\tau(k_1^{D-1},\sigma_1^{D-1},x_1^{D-1}) \frac{T_{k_D}(x_D)}{\sqrt{1-x_D^2}}dx_D\\
&\quad=\int_{[-1,1]} \sum_{\sigma\in\{0,1\}^{D-1}}\sum_{\delta=1}^{D-1}\sum_{k_{\delta}=(N_{\delta}+1)\cdot\sigma_{\delta}}^{\frac{N_{\delta}}{1-\sigma_{\delta}}}\prod_{i=1}^{D-1}\frac{2^{\mathbbm{1}_{k_i>0}}}{\pi}\\
&\quad\quad\quad \int_{[-1,1]^{D-1}}f(x_1^M)\prod_{j=1}^{D-1}\frac{T_{k_j}(x_j)}{\sqrt{1-x_j^2}}d(x_1,\ldots,x_{D-1})\frac{T_{k_D}(x_D)}{\sqrt{1-x_D^2}}dx_D.
\end{align*} 

Rearranging terms yields,
\begin{align*}
I_{N_1,\ldots, N_{D}}(f)(x_1^{M})=&\sum_{k_D=0}^{N_D}\sum_{\sigma\in\{0,1\}^{D-1}}\sum_{\delta=1}^{D-1}\sum_{k_{\delta}=(N_{\delta}+1)\cdot\sigma_{\delta}}^{\frac{N_{\delta}}{1-\sigma_{\delta}}}\prod_{i=1}^{D}\frac{2^{\mathbbm{1}_{k_i>0}}}{\pi}\int_{[-1,1]^{D}}f(x_1^M)\\
&\prod_{j=1}^D\frac{T_{k_j}(x_j)}{\sqrt{1-x_j^2}}d(x_1^{D})\prod_{\delta:\sigma_{\delta}=0}T_{k_{\delta}}(x_{\delta})\prod_{\delta:\sigma_{\delta}=1}T_{m_{\delta}}(x_{\delta}) T_{k_D}(x_D)\\
&+\sum_{k_D=N_D+1}^{\infty}\sum_{\sigma\in\{0,1\}^{D-1}}\sum_{\delta=1}^{D-1}\sum_{k_{\delta}=(N_{\delta}+1)\cdot\sigma_{\delta}}^{\frac{N_{\delta}}{1-\sigma_{\delta}}}\prod_{i=1}^{D}\frac{2^{\mathbbm{1}_{k_i>0}}}{\pi}\int_{[-1,1]^{D}}f(p_1^M)\\
&\prod_{j=1}^D\frac{T_{k_j}(x_j)}{\sqrt{1-x_j^2}}d(x_1^{D})\prod_{\delta:\sigma_{\delta}=0}T_{k_{\delta}}(x_{\delta})\prod_{\delta:\sigma_{\delta}=1}T_{m_{\delta}}(x_{\delta}) T_{m_D}(x_D).\\
\end{align*}
This can be expressed as
\begin{align}
&I_{N_1,\ldots,N_D}^{1,\ldots,D}(f)(x_1^M)=\sum_{\sigma\in\{0,1\}^D}\sum_{\delta=1}^D\sum_{k_{\delta}=(N_{\delta}+1)\cdot\sigma_{\delta}}^{\frac{N_{\delta}}{1-\sigma_{\delta}}}I(k_1^D,x_{D+1}^M)\tau(k_1^D,\sigma_1^D,x_1^D).
\end{align}
\end{proof}

%%
%%
%% Section Comparison of the error bounds
%%
%%
%\section{Comparison of the error bounds}\label{sec-comp}
%We summarize the combination of Theorem \ref{Asymptotic_error_decay_multidim} and Theorem \ref{Asymptotic_error_decay_multidim_new_permu} in the following theorem.

\section{Conclusion}\label{sec-conclusion} 
In this article, we have provided an enhanced error bound for tensorized Chebyshev polynomial interpolation in Theorem \ref{Asymptotic_error_decay_multidim_combined} and have shown several examples. Example \ref{Example_3} highlights the effect of the improved error bound. Here, less interpolation nodes are required to guarantee a pre-specified accuracy. This significantly reduces the computational time, especially if the evaluation of function $f$ at the nodal points is time-consuming.

\bibliographystyle{chicago}%num-names} 
  \bibliography{LiteraturFourierRB}

\end{document}